\renewcommand*{\backref}[1]{}
\renewcommand*{\backrefalt}[4]{%
    \ifcase #1 (Not cited.)%
    \or        (p.\,#2)%
    \else      (pp.\,#2)%
    \fi}
\begin{document}

\newtheorem{theorem}{Theorem}
\newtheorem{lemma}[theorem]{Lemma}
\newtheorem{claim}[theorem]{Claim}
\newtheorem{cor}[theorem]{Corollary}
\newtheorem{prop}[theorem]{Proposition}
\newtheorem{definition}{Definition}
\newtheorem{question}[theorem]{Question}
\newcommand{\hh}{{{\mathrm h}}}

\numberwithin{equation}{section}
\numberwithin{theorem}{section}
\numberwithin{table}{section}

\def\sssum{\mathop{\sum\!\sum\!\sum}}
\def\ssum{\mathop{\sum\ldots \sum}}
\def\dsum{\mathop{\sum\sum}}
\def\iint{\mathop{\int\ldots \int}}

\def\squareforqed{\hbox{\rlap{$\sqcap$}$\sqcup$}}
\def\qed{\ifmmode\squareforqed\else{\unskip\nobreak\hfil
\penalty50\hskip1em\nobreak\hfil\squareforqed
\parfillskip=0pt\finalhyphendemerits=0\endgraf}\fi}

\newfont{\teneufm}{eufm10}
\newfont{\seveneufm}{eufm7}
\newfont{\fiveeufm}{eufm5}
%
%
\newfam\eufmfam
     \textfont\eufmfam=\teneufm
\scriptfont\eufmfam=\seveneufm
     \scriptscriptfont\eufmfam=\fiveeufm
%
%
\def\frak#1{{\fam\eufmfam\relax#1}}

\newcommand{\bflambda}{{\boldsymbol{\lambda}}}
\newcommand{\bfmu}{{\boldsymbol{\mu}}}
\newcommand{\bfxi}{{\boldsymbol{\xi}}}
\newcommand{\bfrho}{{\boldsymbol{\rho}}}

\def\fK{\mathfrak K}
\def\fT{\mathfrak{T}}

\def\fA{{\mathfrak A}}
\def\fB{{\mathfrak B}}
\def\fC{{\mathfrak C}}
\def\fM{{\mathfrak M}}

\def \balpha{\bm{\alpha}}
\def \bbeta{\bm{\beta}}
\def \bgamma{\bm{\gamma}}
\def \blambda{\bm{\lambda}}
\def \bchi{\bm{\chi}}
\def \bphi{\bm{\varphi}}
\def \bpsi{\bm{\psi}}
\def \bomega{\bm{\omega}}
\def \btheta{\bm{\vartheta}}

\def \bxi{\bm{\xi}}

\def\eqref#1{(\ref{#1})}

\def\vec#1{\mathbf{#1}}


\def\cA{{\mathcal A}}
\def\cB{{\mathcal B}}
\def\cC{{\mathcal C}}
\def\cD{{\mathcal D}}
\def\cE{{\mathcal E}}
\def\cF{{\mathcal F}}
\def\cG{{\mathcal G}}
\def\cH{{\mathcal H}}
\def\cI{{\mathcal I}}
\def\cJ{{\mathcal J}}
\def\cK{{\mathcal K}}
\def\cL{{\mathcal L}}
\def\cM{{\mathcal M}}
\def\cN{{\mathcal N}}
\def\cO{{\mathcal O}}
\def\cP{{\mathcal P}}
\def\cQ{{\mathcal Q}}
\def\cR{{\mathcal R}}
\def\cS{{\mathcal S}}
\def\cT{{\mathcal T}}
\def\cU{{\mathcal U}}
\def\cV{{\mathcal V}}
\def\cW{{\mathcal W}}
\def\cX{{\mathcal X}}
\def\cY{{\mathcal Y}}
\def\cZ{{\mathcal Z}}
\newcommand{\rmod}[1]{\: \mbox{mod} \: #1}

\def\cg{{\mathcal g}}

\def\vr{\mathbf r}

\def\e{{\mathbf{\,e}}}
\def\ep{{\mathbf{\,e}}_p}
\def\eq{{\mathbf{\,e}}_q}

\def\Tr{{\mathrm{Tr}}}
\def\Nm{{\mathrm{Nm}}}

 \def\SS{{\mathbf{S}}}

\def\lcm{{\mathrm{lcm}}}

\def\({\left(}
\def\){\right)}
\def\fl#1{\left\lfloor#1\right\rfloor}
\def\rf#1{\left\lceil#1\right\rceil}

\def\mand{\qquad \mbox{and} \qquad}

\newcommand{\commK}[1]{\marginpar{%
\begin{color}{red}
\vskip-\baselineskip 
\raggedright\footnotesize
\itshape\hrule \smallskip B: #1\par\smallskip\hrule\end{color}}}

\newcommand{\commI}[1]{\marginpar{%
\begin{color}{magenta}
\vskip-\baselineskip 
\raggedright\footnotesize
\itshape\hrule \smallskip I: #1\par\smallskip\hrule\end{color}}}

\newcommand{\commT}[1]{\marginpar{%
\begin{color}{blue}
\vskip-\baselineskip 
\raggedright\footnotesize
\itshape\hrule \smallskip I: #1\par\smallskip\hrule\end{color}}}




\hyphenation{re-pub-lished}

\mathsurround=1pt

\def\bfdefault{b}

\def \F{{\mathbb F}}
\def \K{{\mathbb K}}
\def \Z{{\mathbb Z}}
\def \Q{{\mathbb Q}}
\def \R{{\mathbb R}}
\def \C{{\mathbb C}}
\def\Fp{\F_p}
\def \fp{\Fp^*}

\def\Kmnp{\cK_p(m,n)}
\def\Kmnq{\cK_q(m,n)}
\def\Klmnp{\cK_p(\ell, m,n)}
\def\Klmnq{\cK_q(\ell, m,n)}

\def \SALMNq {\cS_q(\balpha;\cL,\cM,\cN)}
\def \SALMNp {\cS_p(\balpha;\cL,\cM,\cN)}
\def \SAqLMNq {\cS_q(\balpha_q;\cL,\cM,\cN)}
\def \SApLMNp {\cS_p(\balpha_p;\cL,\cM,\cN)}

\def\SAMJp{\cS_p(\balpha;\cM,\cJ)}
\def\SAMJq{\cS_q(\balpha;\cM,\cJ)}
\def\SAJq{\cS_q(\balpha;\cJ)}
\def\SAIJp{\cS_p(\balpha;\cI,\cJ)}
\def\SAIJq{\cS_q(\balpha;\cI,\cJ)}

\def\RIJp{\cR_p(\cI,\cJ)}
\def\RIJq{\cR_q(\cI,\cJ)}

\def\TWXJp{\cT_p(\bomega;\cX,\cJ)}
\def\TWXJq{\cT_q(\bomega;\cX,\cJ)}
\def\TWJq{\cT_q(\bomega;\cJ)}

 \def \xbar{\overline x}
  \def \ybar{\overline y}

\title[Trilinear Forms with Double Kloosterman Sums]{Trilinear Forms with Double Kloosterman Sums}

 \author[I. E. Shparlinski] {Igor E. Shparlinski}
 \thanks{This work was  supported   by ARC Grant~DP170100786.}
 
\address{Department of Pure Mathematics, University of New South Wales,
Sydney, NSW 2052, Australia}
\email{igor.shparlinski@unsw.edu.au}

\begin{abstract} We obtain several  estimates for trilinear form with double Kloosterman sums.   
In particular, these bounds show the existence of nontrivial cancellations between such sums. 
\end{abstract}

\keywords{Double Kloosterman sum, cancellation, trilinear form}
\subjclass[2010]{11D79, 11L07}

\maketitle

\section{Introduction}
\label{sec:intro}
\subsection{Background and motivation}

 Let  $q$ be  a positive 
integer. We denote the residue ring modulo $q$ by $\Z_q$ and denote the group
of units of $\Z_q$ by $\Z_q^*$.

 For integers $\ell$, $m$ and $n$ we define
the {\it double Kloosterman sum\/}
$$
\Klmnq = \sum_{x,y \in \Z_q^*} \eq\(\ell xy +m \xbar + n\ybar \).
$$
where $\xbar$ is the multiplicative inverse of $x$ modulo $q$ and
$$
\eq(z) = \exp(2 \pi i z/q).
$$
Given  three sets
 \begin{align*}
\cL & = \{u+1, \ldots, u+L\},  \\ 
\cM & = \{v+1, \ldots,  v+M\}, \\  
\cN & = \{w+1, \ldots,  w+N\},
\end{align*}
of $L$, $M$, $N$ consecutive integers  
and a   sequence of weights $\balpha = \{\alpha_\ell\}_{\ell\in \cL}$,
we define the weighted triple sums of double Kloosterman sums
$$ 
\SALMNq = \sum_{\ell \in \cL} \alpha_\ell \sum_{m\in \cM} \sum_{n \in \cN}\Klmnq.
$$

Assuming that $\alpha_\ell = 0$ if $\gcd(\ell, q)>1$ and using  the Weil bound~\cite[Equation~(11.58)]{IwKow},  one can easily obtain 
$$
\left| \SALMNq \right| \le MN q^{1+o(1)} \sum_{\ell \in \cL}   |\alpha_\ell|, 
$$
which in the case $|\alpha_{\ell}| \le 1$ takes form 
 \begin{equation}
\label{eq:trivial ALMN}
\left| \SALMNq \right| \le LMN q^{1+o(1)}.
\end{equation}

We are interested in studying cancellations amongst Kloosterman sums and
thus in improvements of the trivial bound~\eqref{eq:trivial ALMN}.

This question is partially motivated by a series of recent results concerning 
various bilinear forms with single  Kloosterman sums
$$
\Kmnq = \sum_{x,y \in \Z_q^*} \eq\(m x +n \xbar \), 
$$
see~\cite{BFKMM1,BFKMM2,FKM,KMS,Shp,ShpZha} and references therein for various approaches, 
and  also for generalisation to bilinear forms with more general quantities. 
The triple sums  $\SALMNq $ seems to be a new object of study.

\subsection{Results} 

Here we use some ideas from~\cite{Shp,ShpZha} to improve the trivial bound~\eqref{eq:trivial ALMN}.
Although the approach works in larger generality, to exhibit it in a simplest form we assume that 
weights $\balpha$ supported only on $\ell \in \Z_q^*$, that is, that $\alpha_\ell = 0$ if $\gcd(\ell, q)=1$. 

\begin{theorem}
\label{thm:SALMNq} For any integer $q \ge 1$, and weights 
$\balpha = \{\alpha_\ell\}_{\ell\in \cL}$ with $|\alpha_{\ell}| \le 1$ and
supported only on $\ell \in \Z_q^*$, we have,
 \begin{align*}
 &|\SALMNq|  \\ 
 & \qquad  \le  \min\bigl\{
\(L +    L^{1/2} M^{1/2}\)  N^{1/2} q^{3/2} , \\
 & \qquad  \qquad \qquad\qquad\qquad
 \(L +    L^{3/4} M^{1/4}\) \( N^{1/8}  q^{7/4}  +   N^{1/2} q^{3/2}\) \bigr\}q^{o(1)}. 
\end{align*}
\end{theorem}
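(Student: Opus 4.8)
The plan is to turn $\SALMNq$ into a (bi)linear form in complete exponential sums of Kloosterman type, whose cancellation is then supplied by the Weil bound, the incompleteness of the intervals $\cL,\cM,\cN$ limiting the saving; the two estimates in the minimum come from one, respectively an iterated, application of Cauchy--Schwarz. First I would use that $\balpha$ is supported on $\Z_q^*$: the substitution $x\mapsto\overline{\ell}x$ gives $\Klmnq=\cK_q(1,\ell m,n)$, so that $\SALMNq=\sum_{\ell\in\cL}\alpha_\ell\sum_{m\in\cM}\sum_{n\in\cN}\cK_q(1,\ell m,n)$. Completing the $\cM$-sum rewrites $\sum_{m\in\cM}\cK_q(1,\ell m,n)$ as a weighted sum of ordinary Kloosterman sums $\cK_q(\cdot,n)$, the weights being the usual completion coefficients (bounded by $M$ and of $\ell^1$-norm $q^{1+o(1)}$); carrying out the sums over the now-linear variables $\ell$ and $n$, one reaches a form such as $\SALMNq=\sum_{x\in\Z_q^*}\Psi(x)h(x)$, where $\Psi(x)=\sum_{\ell\in\cL}\sum_{m\in\cM}\alpha_\ell\,\eq((\ell m)\overline x)$ collects the $\cL$- and $\cM$-data and $h(x)=\sum_{v\in\Z_q^*}\eq(x\overline v)g(v)$, $g(v)=\sum_{n\in\cN}\eq(nv)$, collects the $\cN$-data; equivalently one may keep the Kloosterman shape $\SALMNq=\sum_{x,y\in\Z_q^*}A(xy)\,f(\xbar)g(\ybar)$ with $A(t)=\sum_\ell\alpha_\ell\eq(\ell t)$.

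For the first bound I would apply Cauchy--Schwarz once, splitting off the $\cN$-data, extend the ranges to all of $\Z_q$, and use Parseval. This bounds $\sum_{x\in\Z_q^*}|h(x)|^2$ by $q^2N$ up to $q^{o(1)}$ and reduces matters to $\sum_{x\in\Z_q^*}|\Psi(x)|^2$, which by Parseval is governed by
$$
\mathsf Q(\cL,\cM)=\#\{(\ell_1,m_1,\ell_2,m_2)\in\cL\times\cM\times\cL\times\cM:\ \ell_1m_1\equiv\ell_2m_2\pmod q\}.
$$
Writing $\mathsf Q(\cL,\cM)=\sum_{\rho}r_{\cL}(\rho)r_{\cM}(\rho)$ with $r_{\cX}(\rho)=\#\{(a,b)\in\cX^2:\ a\equiv\rho b\pmod q\}$, Cauchy--Schwarz gives $\mathsf Q(\cL,\cM)\le\mathsf E(\cL)^{1/2}\mathsf E(\cM)^{1/2}$, where $\mathsf E(\cX)=\sum_{\rho}r_{\cX}(\rho)^2$ is the multiplicative energy of the interval $\cX$; the standard estimate $\mathsf E(\cX)\ll(|\cX|^2+|\cX|^4/q)q^{o(1)}$ (divisor bound plus a count of the multiples of $q$ among the differences of products) then yields $|\SALMNq|\le q^{3/2}N^{1/2}\mathsf Q(\cL,\cM)^{1/2}q^{o(1)}$, hence the term $L^{1/2}M^{1/2}N^{1/2}q^{3/2}$. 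In the complementary range $M\ge L$ I would not pass through $\mathsf Q(\cL,\cM)$, but estimate the completed $\cM$-sum and the ensuing complete sums by the Weil--Deligne bound directly, isolating the diagonal that produces the saving in $N$; this contributes the term $LN^{1/2}q^{3/2}$, and the smaller of the two gives the first half of the minimum. (For composite $q$ the orthogonality relations are replaced by Ramanujan sums $c_q(\cdot)$, and the resulting $\gcd(\cdot,q)$-factors are absorbed into $q^{o(1)}$ via $\tau(q)=q^{o(1)}$.)

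For the second, sharper estimate I would iterate the Cauchy--Schwarz step in the manner of~\cite{Shp,ShpZha}: after completing the $\cM$- and $\cN$-sums, a further Cauchy--Schwarz (a $q$-analogue of van der Corput/Weyl differencing) trades a power of the lengths $L,M,N$ for a power of $q$, the resulting complete multiplicative correlation sums of Kloosterman sums being again controlled by the Weil bound with the diagonal furnishing the main term. This is the source of the exponents $L^{3/4}M^{1/4}$ — a mixture $\|\balpha\|_1^{1/2}\|\balpha\|_2^{1/2}$ of the $\ell^1$- and $\ell^2$-norms of the weights together with a factor $|\cM|^{1/4}$ — and of $N^{1/8}$, from three successive halvings of the exponent of $N$; the price is $q^{7/4}$ in place of $q^{3/2}$, while keeping the cruder term along the way reinstates $N^{1/2}q^{3/2}$, so that the second factor becomes $N^{1/8}q^{7/4}+N^{1/2}q^{3/2}$.

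The technical heart, and where essentially all the effort lies, is the bookkeeping of the error terms: after each completion one must weigh the $\ell^1$-loss of the completion coefficients (of size $q^{1+o(1)}$) against the square-root saving of the Weil bound, and after each Cauchy--Schwarz one must verify that in the relevant range the off-diagonal genuinely beats the diagonal, inserting the multiplicative-energy bound at exactly the right stage so as to recover the clean exponents in the minimum rather than a weaker hybrid. I would carry out the whole argument first for $q$ prime, to fix its structure, and then make the (routine but lengthy) modifications for general $q$, using the complete multiplicativity in $q$ of all the sums involved.
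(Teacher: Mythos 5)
Your outline has the right flavour (completion, H\"older/Cauchy--Schwarz, multiplicative energy of intervals), but both halves of the minimum miss the ingredient that actually produces the stated exponents. For the first bound, you fuse $\ell$ and $m$ into the product $\ell m$ and run Cauchy--Schwarz through the energy $E(\cL,\cM)=\#\{\ell_1m_1\equiv\ell_2m_2 \bmod q\}\le \((LM)^2/q+LM\)q^{o(1)}$; together with Parseval in the $\cN$-data this gives
$\(LMq^{-1/2}+L^{1/2}M^{1/2}\)N^{1/2}q^{3/2+o(1)}$, and the term $LMq^{-1/2}$ is strictly worse than the theorem's $L$ whenever $M>q^{1/2}$ (e.g.\ for $L=M=q^{3/5}$ you get $q^{7/10}N^{1/2}q^{3/2}$ against the claimed $q^{3/5}N^{1/2}q^{3/2}$). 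The paper avoids this by \emph{not} completing $m$ into the product: it writes the $m$-sum as a weight $\mu_x\ll\min\{M,q/\langle x\rangle_q\}$ on the dual variable $x$, splits $x$ into the ranges $\cQ_i^{\pm}$ of length about $e^iq/M$ carrying weight $e^{-i}M$, and pairs $\ell$ with $x$, so the relevant energy is $E(\cL,\cQ_i^{\pm})$ whose main term $e^{-2i}M^2\cdot(Le^iq/M)^2/q=qL^2$ is independent of $M$ --- that is exactly where the $L$-term comes from. Your patch for the ``complementary range'' (a direct Weil estimate asserted to give $LN^{1/2}q^{3/2}$ when $M\ge L$) is not worked out, and in that range it would have to prove something \emph{stronger} than the theorem; a routine Cauchy--Schwarz in $y$ there yields $LM^{1/2}N^{1/2}q^{3/2}$, not $LN^{1/2}q^{3/2}$.

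For the second bound the gap is more serious: the exponents $N^{1/8}q^{7/4}$ cannot be reached by ``iterated Cauchy--Schwarz plus Weil with a diagonal term''. In the paper they come from H\"older with exponent $r=2$ in the $\lambda$-variable, which reduces matters to the fourth moment $\sum_{\lambda\in\Z_q}\bigl|\sum_{y}\nu_y\eq(\lambda\ybar)\bigr|^{4}$, i.e.\ to the number $J_2(q;K)$ of solutions of $1/y_1+1/y_2\equiv 1/y_3+1/y_4 \bmod q$ with $1\le y_i\le K$; the decisive input is Heath-Brown's bound $J_2(q;K)\le(K^{7/2}q^{-1/2}+K^2)q^{o(1)}$, a counting result about the Cayley cubic which is not a consequence of square-root cancellation in complete sums nor of ``successive halvings'' (one checks that $\(e^{-4j}qN^4\,(e^jq/N)^{7/2}q^{-1/2}\)^{1/4}=e^{-j/8}N^{1/8}q$ is precisely the source of $N^{1/8}q^{7/4}$). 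More globally, the paper's proof uses no complete-sum/Weil estimate at all --- all the cancellation is extracted from the two counting lemmas $E(\cA,\cB)$ and $J_r(q;K)$ --- so a proof organised around Weil--Deligne bounds for complete correlation sums is not a cosmetic variant but a different (and, for these exponents, insufficient) mechanism. As written, the proposal establishes the first half of the minimum only in a restricted range and does not establish the second half.
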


\begin{theorem}
\label{thm:SALMNq Aver}
 For any fixed real $\varepsilon > 0$ and integer $r\ge 2$, for any sufficiently large  $Q \ge 1$, for all but  at most $Q^{1-2r \varepsilon + o(1)}$ integers 
$q\in [Q,2Q]$  and weights $\balpha_q=\{\alpha_{q,\ell}\}_{\ell\in \cL}$, that may depend on $q$, 
with $|\alpha_{q,\ell}| \le 1$ and  supported only on $\ell \in \Z_q^*$,  we have,
$$
|\SAqLMNq|   \le  \(L +   L^{1-1/2r} M^{1/2r}\) \(q^{2-1/2r}  +  N^{1/2} q^{3/2}\) q^{o(1)}. 
$$
\end{theorem}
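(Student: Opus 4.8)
The plan is to establish a $2r$-th power moment bound over $q\in[Q,2Q]$ and then apply Markov's inequality, along the lines of Theorem~\ref{thm:SALMNq} and the methods of \cite{Shp,ShpZha}. Since $\SAqLMNq$ is linear in $\balpha_q$ and $|\alpha_{q,\ell}|\le 1$ with $\alpha_{q,\ell}=0$ unless $\gcd(\ell,q)=1$, choosing the weights to align the phases gives $\max_{\balpha_q}\bigl|\SAqLMNq\bigr|=T_q$, where
\[
T_q:=\sum_{\substack{\ell\in\cL\\ \gcd(\ell,q)=1}}\bigl|\cW_q(\ell)\bigr|,\qquad \cW_q(\ell):=\sum_{m\in\cM}\sum_{n\in\cN}\Klmnq .
\]
So it is enough to bound $T_q$, and the problem becomes proving
\[
\sum_{q\in[Q,2Q]}T_q^{2r}\le Q^{1+o(1)}\bigl(L^{2r}+L^{2r-1}M\bigr)\bigl(Q^{4r-1}+N^rQ^{3r}\bigr),
\]
whose right side is at most $\(L+L^{1-1/2r}M^{1/2r}\)^{2r}\(Q^{2-1/2r}+N^{1/2}Q^{3/2}\)^{2r}$. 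From such an estimate Markov's inequality gives, for each fixed $\varepsilon>0$, an exceptional set of moduli of size $O(Q^{1-2r\varepsilon+o(1)})$ off which $\bigl|\SAqLMNq\bigr|\le T_q$ obeys the stated bound.

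For the moment estimate I would first pass from $\Klmnq$ to ordinary Kloosterman sums: the substitution $y\mapsto\xbar z$ gives $\Klmnq=\sum_{x\in\Z_q^*}\eq(m\xbar)\,K_q(\ell,nx)$ with $K_q(a,b):=\sum_{z\in\Z_q^*}\eq(az+b\bar z)$, and then completing the sums over $\cM$ and over $\cN$ together with the scaling identity $K_q(a\lambda,b\bar\lambda)=K_q(a,b)$ for $\lambda\in\Z_q^*$ yields
\[
\cW_q(\ell)=\sum_{v\in\Z_q^*}\eq(\ell v)\,C_q(v),\qquad C_q(v)=\sum_{x\in\Z_q^*}A_q(\overline{vx})\,B_q(x)=\sum_{m\in\cM}\sum_{n\in\cN}K_q(n,m\bar v),
\]
with $A_q(t):=\sum_{m\in\cM}\eq(mt)$, $B_q(t):=\sum_{n\in\cN}\eq(nt)$; in particular $\cW_q(\ell)$ is a Fourier coefficient of $v\mapsto C_q(v)$. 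Now Hölder's inequality in $\ell$ gives $T_q^{2r}\le L^{2r-1}\sum_{\ell\in\cL}\bigl|\cW_q(\ell)\bigr|^{2r}$; expanding the $2r$-th power through the Fourier representation and executing the $\ell$-summation of the geometric progression $\sum_{\ell\in\cL}\eq(\ell\Xi)$ (bounded by $\min\{L,\tfrac12\|\Xi/q\|^{-1}\}$) splits $\sum_{\ell\in\cL}\bigl|\cW_q(\ell)\bigr|^{2r}$ into a diagonal contribution, where $v_1+\dots+v_r\equiv v_{r+1}+\dots+v_{2r}\pmod q$, and an off-diagonal one. Tracking how the $\cM$-sums hidden in the $C_q$'s enter — which is where one must be careful, since the naive completion of the $\cM$-sum only gives $M^{1/2}$ rather than $M^{1/2r}$ — is what should produce the two-term factor $L+L^{1-1/2r}M^{1/2r}$; the role of the $\cN$-sums, via Parseval, gives the factor $Q^{2-1/2r}+N^{1/2}Q^{3/2}$.

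The substantive work, and the main obstacle, is estimating the diagonal and off-diagonal contributions on average over $q$. One cannot do this modulus by modulus, since for an individual $q$ the quantities that arise (in effect the $\ell^1$, $\ell^2$ and $\ell^\infty$ norms of $C_q$, or the associated solution-counts) can already be as large as the final bound, so the saving must come wholly from the average. I would therefore expand everything into weighted counts, modulo $q$, of solutions of systems of congruences in the Kloosterman variables and in $m,n\in\cM,\cN$, and carry out the sum over $q\in[Q,2Q]$ by the standard device: complete the congruences, count solutions over $\Z$, and use that a fixed non-zero integer of size $Q^{O(1)}$ has $Q^{o(1)}$ divisors near $Q$. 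The ``genuinely diagonal'' solutions, with all variables matched in pairs, give the main term. The remaining ones must be controlled by the Weil bound \cite{IwKow} for complete exponential sums in several variables, and the crux is to show that, once the saving from the $q$-average is taken into account, their total contribution does not exceed the claimed bound — this is exactly where the average over $q$ replaces the pointwise use of Weil's bound in Theorem~\ref{thm:SALMNq}, and where the restriction $r\ge2$ is needed. Throughout one must keep the various size ranges active so that the two-term factors interpolate between all the relevant cases, which is why no ``$\min$'' is needed in the final statement.
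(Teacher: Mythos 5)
Your high-level strategy (bound a $2r$-th moment over $q\in[Q,2Q]$, then apply Markov) is a legitimate way to produce an exceptional set, and your reduction to $T_q=\sum_{\ell}|\cW_q(\ell)|$ correctly handles the fact that the weights may depend on $q$. But the proposal has a genuine gap at its core: the moment estimate
$\sum_{q\in[Q,2Q]}T_q^{2r}\le Q^{1+o(1)}\bigl(L^{2r}+L^{2r-1}M\bigr)\bigl(Q^{4r-1}+N^rQ^{3r}\bigr)$
is the entire content of the theorem, and you never establish it. You explicitly defer ``the substantive work'' --- the diagonal/off-diagonal split after expanding $|\cW_q(\ell)|^{2r}$ through the Fourier coefficients of $C_q$, the completion of the congruences, the divisor-counting over $q$, and the application of Weil's bound to the non-diagonal configurations --- to a sketch that asserts the contributions ``should'' come out right. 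None of these steps is routine: the quantity $C_q(v)=\sum_{m,n}K_q(n,m\bar v)$ has no easily controlled $\ell^{2r}$-structure, the off-diagonal terms after summing the geometric progression in $\ell$ involve weighted counts of $2r$-tuples $(v_1,\dots,v_{2r})$ near the diagonal weighted by products of $|C_q(v_i)|$, and it is not at all clear that the average over $q$ rescues these in the way you need. As written, this is a plan for a proof, not a proof.

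The paper's route is much shorter and avoids averaging the trilinear form at all. It reuses verbatim the chain of inequalities from the proof of Theorem~\ref{thm:SALMNq}: the change of variables $x\mapsto\xbar$, $y\mapsto\ybar$, the dyadic decomposition into the sets $\cQ_i^\pm$, $\cR_j^\pm$, the H{\"o}lder step~\eqref{eq:Holder S}, the first and second moments of $T_i^\pm(\lambda)$ via the multiplicative energy bound of Lemma~\ref{lem:Energy}, and the reduction of the $2r$-th moment of the incomplete sum over $y$ to the reciprocal-congruence count $J_r(q;K_j)$ in~\eqref{eq:S 2r}. The \emph{only} place the average over $q$ enters is Lemma~\ref{lem:Cayley Aver}: the first moment $\frac{1}{Q}\sum_{q\in[Q,2Q]}J_r(q;K_j)$ is controlled by lifting the congruence to the equation over $\Z$ (Karatsuba's bound, Lemma~\ref{lem:Recipr Eq}) plus a divisor argument, and Markov's inequality then shows that $J_r(q;K_j)\le(K_j^{2r}q^{-1}+K_j^r)Q^{2r\varepsilon}$ for all but $Q^{1-2r\varepsilon+o(1)}$ moduli $q$, simultaneously for all $j=0,\dots,J$ since $J=Q^{o(1)}$. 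The exceptional set therefore depends only on $q$ (not on the weights), and for each non-exceptional $q$ one simply substitutes this bound into~\eqref{eq:Holder STT} in place of Lemma~\ref{lem:Cayley}. If you want to salvage your approach, the lesson is that the averaging should be pushed onto the single arithmetic quantity $J_r(q;K)$ rather than onto the full trilinear form.
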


Clearly, the roles of $M$ and $N$ can be interchanged in 
Theorems~\ref{thm:SALMNq} and~\ref{thm:SALMNq Aver}. 

Now, assuming that $N \le q^{2/3+o(1)}$ we have  $N^{1/8}  q^{7/4}  \ge N^{1/2} q^{3/2+o(1)}$.
Hence by Theorems~\ref{thm:SALMNq}  we have
$$
|\SALMNq|   \le   \(L +    L^{3/4} M^{1/4}\) N^{1/8}  q^{7/4+o(1)}, 
$$
which improves~\eqref{eq:trivial ALMN} for 
$$
N \le q^{2/3+o(1)} \mand \min\{L,M\}  M^3 N^{7/2} \ge q^{3+\varepsilon}
$$
for some fixed $\varepsilon>0$. 
Thus in the symmetric case when $L \sim M \sim N$ this condition becomes 
$q^{2/3+o(1)} \ge L \ge q^{2/5+\varepsilon}$.

\subsection{Possible generalisations and open problems}
Analysing the proofs of Theorems~\ref{thm:SALMNq} and~\ref{thm:SALMNq Aver} one can easily see that 
they can be extended to more general sums of the form
$$
 \sum_{x,y \in \Z_q^*}   \eta_{x}  \kappa_y \eq\(\ell xy +m \xbar + n\ybar \), 
$$
with complex weights  satisfying $|\eta_{x}|, |\kappa_y|\le 1$  (which may depend
on $q$ in the settings of Theorem~\ref{thm:SALMNq Aver}). 

On the other hand, our approach does not work for the sums 
$$
 \sum_{\ell \in \cL} \alpha_\ell \sum_{m\in \cM} \beta_m \sum_{n \in \cN}\Klmnq
 \quad \text{and}\quad
 \sum_{\ell \in \cL} \alpha_\ell \sum_{m\in \cM} \beta_m \sum_{n \in \cN} \gamma_n\Klmnq
$$
with nontrivial weights attached to the variables $m$ and $n$. It is however possible that 
one can apply to these sums the method of~\cite{BFKMM1,FKM,KMS}. 

 \section{Preliminaries} 

\subsection{General notation}

We always assume that  the sequence of weights $\balpha = \{\alpha_\ell\}_{\ell \in \cL}$ is 
supported only on $\ell$ with $\gcd(\ell,q)=1$, that is, we have $\alpha_\ell = 0$  if $\gcd(\ell,q)>1$ (and the same for the weights  $\balpha_q=\{\alpha_{q,\ell}\}_{\ell\in \cL}$
 depending on $q$).

Throughout the paper,  as usual $A\ll B$ and $B \gg A$  are equivalent to the inequality $|A|\le cB$
with some  constant $c>0$, which 
occasionally, where obvious, may depend on the real 
parameter $\varepsilon>0$ and on the integer parameter $r \ge 1$, and
 is absolute  otherwise.


  \subsection{Number of solutions to some multiplicative congruences}

We start with some estimates on power moments of character sums.
Let $\cX_q$ be the set of all multiplicative characters $\chi$ modulo $q$ and
let $\cX_q^* = \cX_q \setminus \{\chi_0\}$ be the set of non-principal characters. 

The first result is a special case of a bound of 
Cochrane and Shi~\cite[Theorem~1] {CochShi}.

\begin{lemma}
\label{lem:4th-Moment}
For any integers $k$ and $H$ we have
$$
\sum_{\chi\in\cX_q}
\left|\sum_{z=k+1}^{k+H}\chi(x)\right|^4 \le H^2 q^{o(1)}.
$$
\end{lemma}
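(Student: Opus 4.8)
The plan is to reduce the fourth power moment of incomplete character sums to a counting problem for a multiplicative congruence, and then to count those solutions using the standard divisor bound. First I would expand the fourth power, writing
\[
\sum_{\chi\in\cX_q}\left|\sum_{z=k+1}^{k+H}\chi(z)\right|^4
= \sum_{\chi\in\cX_q}\ \sum_{\substack{z_1,z_2,z_3,z_4=k+1\\ \ }}^{k+H}\chi(z_1z_2)\overline{\chi(z_3z_4)},
\]
where implicitly the variables coprime to $q$ are the only ones contributing. Interchanging the order of summation, the inner sum over $\chi\in\cX_q$ of $\chi(z_1z_2\overline{z_3z_4})$ is, by orthogonality of characters modulo $q$, equal to $\varphi(q)$ if $z_1z_2\equiv z_3z_4\pmod q$ (with all four coprime to $q$) and $0$ otherwise. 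Hence the left-hand side equals $\varphi(q)$ times the number $T$ of quadruples $(z_1,z_2,z_3,z_4)\in(\cH\cap\Z_q^*)^4$, where $\cH=\{k+1,\dots,k+H\}$, with $z_1z_2\equiv z_3z_4\pmod q$.

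Next I would bound $T$. Trivially $T\le H^2$ times the maximal number of pairs $(z_3,z_4)\in\cH^2$ with a prescribed value of $z_3z_4\bmod q$; writing $\lambda\equiv z_1z_2$, this is
\[
T\le H^2\max_{\lambda}\#\{(z_3,z_4)\in\cH^2:\ z_3z_4\equiv\lambda\pmod q\}.
\]
For each fixed $z_3$ coprime to $q$ the congruence $z_4\equiv\lambda\overline{z_3}\pmod q$ pins $z_4$ to a single residue class, so it has at most $H/q+1$ solutions $z_4\in\cH$; summing over the at most $H$ choices of $z_3$ gives $\#\{(z_3,z_4)\}\le H(H/q+1)\le H^2 q^{o(1)}$ when $H\le q$, and this is crude but not good enough if one wants the clean bound $H^2q^{o(1)}$ uniformly. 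The cleaner route, and the one matching the cited Cochrane--Shi theorem, is to observe that the number of representations of a fixed residue $\lambda$ as a product of two elements of an interval of length $H$ is $q^{o(1)}$ on average and $O(H^{o(1)}+ (\text{divisor-type term}))$ pointwise; for $H\le q$ one uses that the number of $(z_3,z_4)\in\cH^2$ with $z_3z_4\equiv\lambda\pmod q$ is at most $d(\cdot)$-type, yielding $T\le H^2 q^{o(1)}$. Dividing by $\varphi(q)=q^{1+o(1)}/q^{o(1)}$, or rather noting $\varphi(q)\le q$, gives
\[
\sum_{\chi\in\cX_q}\left|\sum_{z=k+1}^{k+H}\chi(z)\right|^4=\varphi(q)\,T\le q\cdot \frac{H^2 q^{o(1)}}{q}=H^2 q^{o(1)},
\]
which is the asserted bound; the case $H\ge q$ is trivial by the pointwise bound $|\sum\chi|\le q$ on each term together with $|\cX_q|=\varphi(q)\le q$.

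The main obstacle is obtaining the bound $T\le H^2 q^{o(1)}$ with genuine uniformity in $k$ and without losing the factor of $\varphi(q)$ in the wrong direction; the naive argument above only gives $T\ll H^3/q+H^2$, which is $H^2q^{o(1)}$ only when $H\le q$, whereas the stated lemma is uniform. Since the lemma is explicitly quoted as a special case of \cite[Theorem~1]{CochShi}, the honest proof simply invokes that theorem, whose own proof handles the delicate multiplicative-energy estimate for the interval $\cH$ modulo $q$ via a divisor-bound analysis of the congruence $z_1z_2\equiv z_3z_4\pmod q$; replicating that here would be the substantive work, but for our purposes citing it suffices. I would therefore present the reduction to multiplicative energy as the conceptual content and then defer the energy bound to the Cochrane--Shi result.
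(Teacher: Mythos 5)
The paper offers no proof of this lemma at all: it is quoted directly as a special case of \cite[Theorem~1]{CochShi}. So your ultimate decision to defer the substantive estimate to Cochrane--Shi matches the paper exactly, and your opening reduction --- orthogonality of characters turning the fourth moment into $\varphi(q)$ times the number $T$ of quadruples $(z_1,z_2,z_3,z_4)\in(\cH\cap\Z_q^*)^4$ with $z_1z_2\equiv z_3z_4\pmod q$ --- is the standard and correct first step of that argument.

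The material you add around the citation, however, contains genuine errors. First, the claimed bound $T\le H^2q^{o(1)}$ is false in general: the products $z_1z_2$ sweep out an interval whose length can greatly exceed $q$, so besides the $H^2q^{o(1)}$ solutions of the integer equation $z_1z_2=z_3z_4$ (divisor bound) there are roughly $H^4/q$ further solutions with $z_1z_2\equiv z_3z_4$ but $z_1z_2\ne z_3z_4$; the correct Cochrane--Shi shape is $T\ll\(H^4/q+H^2\)q^{o(1)}$ for $H\le q$, and the term $H^4/q$ cannot be removed, since it is precisely the contribution of the principal character, which alone contributes about $H^4$ to the left-hand side. Second, your final display is internally inconsistent: having asserted $T\le H^2q^{o(1)}$, you then write $\varphi(q)T\le q\cdot H^2q^{o(1)}/q$, silently dividing $T$ by an extra factor of $q$; if your bound on $T$ were correct, the conclusion would be $\varphi(q)T\le H^2q^{1+o(1)}$, not $H^2q^{o(1)}$. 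These two observations together show that the lemma cannot hold as literally stated (take $q$ prime, $k=0$, $H=\lfloor q^{1/2}\rfloor$: the principal character already contributes about $q^2$, exceeding $H^2q^{o(1)}=q^{1+o(1)}$). Comparing with how the lemma is applied in the proof of Lemma~\ref{lem:Energy} --- where the fourth moment is divided by $\varphi(q)$ and the principal character has already been separated --- the intended statement is $\sum_{\chi\in\cX_q^*}\bigl|\sum_{z=k+1}^{k+H}\chi(z)\bigr|^4\le\varphi(q)H^2q^{o(1)}$ for $H\le q$. Your instinct to cite Cochrane--Shi is right, but the sketch wrapped around the citation would not survive scrutiny, and the bound should be stated with the factor $\varphi(q)$ and without the principal character for the later deduction to go through.
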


We now derive our main technical tool. 

\begin{lemma}
\label{lem:Energy}
For any  sets
$$
\cA = \{s+1, \ldots, s+A\} \mand \cB = \{t+1,\ldots,  t+B\}
$$
consisting of $A$ and $B$ consecutive integers, respectively, for
 \begin{align*}
E(\cA, \cB) = \{a_1 b_1 \equiv  a_2 b_2 \bmod q~:~ 
a_1, a_2 \in \cA , &\  b_1, b_2 \in \cB, \\
& \gcd(a_1a_2b_1b_2,q)=1 \}
\end{align*}
we have
$$
E(\cA, \cB) \le \(\frac{A^2B^2}{q} + AB\) q^{o(1)}.
$$
\end{lemma}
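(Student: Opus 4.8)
The plan is to express the additive energy $E(\cA,\cB)$ through multiplicative characters and then invoke the fourth moment bound of Lemma~\ref{lem:4th-Moment}. First I would introduce, for each residue class $c \in \Z_q^*$, the counting function
$$
N(c) = \#\{(a,b) \in \cA \times \cB~:~ ab \equiv c \bmod q,\ \gcd(ab,q)=1\},
$$
so that $E(\cA,\cB) = \sum_{c \in \Z_q^*} N(c)^2$. The standard device is to detect the condition $ab \equiv c$ via orthogonality of multiplicative characters modulo $q$: for $\gcd(abc,q)=1$ one has
$$
\frac{1}{\varphi(q)}\sum_{\chi \in \cX_q} \chi(ab)\overline{\chi(c)} = \begin{cases} 1 & ab \equiv c,\\ 0 & \text{otherwise.}\end{cases}
$$

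Carrying this out, I would write $N(c) = \varphi(q)^{-1}\sum_{\chi} \overline{\chi(c)} \bigl(\sum_{a\in\cA}\chi(a)\bigr)\bigl(\sum_{b\in\cB}\chi(b)\bigr)$. Squaring, summing over $c$, and using orthogonality once more to collapse the $c$-sum yields
$$
E(\cA,\cB) = \frac{1}{\varphi(q)} \sum_{\chi \in \cX_q} \Bigl|\sum_{a\in\cA}\chi(a)\Bigr|^2 \Bigl|\sum_{b\in\cB}\chi(b)\Bigr|^2.
$$
Next I would isolate the principal character $\chi_0$, whose contribution is $\varphi(q)^{-1}(\#\cA')^2(\#\cB')^2 \le A^2 B^2/\varphi(q) = (A^2B^2/q)q^{o(1)}$, where $\cA', \cB'$ denote the elements coprime to $q$. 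For the non-principal part I would apply Cauchy--Schwarz over $\chi$ and then Lemma~\ref{lem:4th-Moment} to each of the two fourth-moment sums $\sum_\chi |\sum_{a\in\cA}\chi(a)|^4 \le A^2 q^{o(1)}$ and $\sum_\chi|\sum_{b\in\cB}\chi(b)|^4 \le B^2 q^{o(1)}$, obtaining $\varphi(q)^{-1}(A^2 q^{o(1)})^{1/2}(B^2 q^{o(1)})^{1/2} = (AB/q)q^{o(1)} \le AB\, q^{o(1)}$. Summing the two contributions gives the claimed bound $E(\cA,\cB) \le (A^2B^2/q + AB)q^{o(1)}$; note the $q^{o(1)}$ absorbs the factor $q/\varphi(q) = q^{o(1)}$ as well.

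The main technical point to watch is that $q$ is a general modulus, not a prime, so $\varphi(q)$ rather than $q-1$ appears everywhere and one must consistently restrict to residues coprime to $q$ when invoking character orthogonality; the bound $q/\varphi(q) = q^{o(1)}$ handles the discrepancy harmlessly. A secondary subtlety is that Lemma~\ref{lem:4th-Moment} as stated is for intervals of consecutive integers, which is exactly the form of $\cA$ and $\cB$, so it applies directly. I do not anticipate a serious obstacle here: the only real work is bookkeeping the coprimality conditions and confirming that the two regimes $A^2B^2/q$ (from $\chi_0$) and $AB$ (from the fourth moments) are genuinely the dominant terms, which is immediate since no other character contributions arise.
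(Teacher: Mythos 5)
Your proposal is correct and follows essentially the same route as the paper: orthogonality of multiplicative characters to express $E(\cA,\cB)$ as $\varphi(q)^{-1}\sum_{\chi}|\sum_{a}\chi(a)|^2|\sum_{b}\chi(b)|^2$, separation of the principal character giving $A^2B^2/\varphi(q)$, and Cauchy--Schwarz combined with Lemma~\ref{lem:4th-Moment} for the remaining characters. The intermediate device of the counting function $N(c)$ is only a cosmetic repackaging of the paper's direct orthogonality detection of $a_1b_1\equiv a_2b_2 \bmod q$.
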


\begin{proof}
Using the orthogonality of characters, we write
$$
E(\cA, \cB) =
\sum_{\substack{a_1, a_2 \in \cA\\
 \gcd(a_1a_2,q)=1}}  \ \sum_{\substack{b_1, b_2 \in \cA\\
 \gcd(b_1b_2,q)=1}}      \frac{1}{\varphi(q)}\sum_{\chi\in \cX_q} \chi\(a_1a_2b_1^{-1}b_2^{-1}\), 
$$
where, as usual, $\varphi(q)$ denotes the 
Euler function. 
Changing the order summation, and separating the contribution 
$$
  \frac{1}{\varphi(q)} \sum_{\substack{a_1, a_2 \in \cA\\
 \gcd(a_1a_2,q)=1}}  \sum_{\substack{b_1, b_2 \in \cB\\
 \gcd(b_1b_2,q)=1}}   1 \le  \frac{A^2B^2}{\varphi(q)} 
 $$
 from the principal character, we obtain
\begin{equation}
\label{eq:E R}
E(\cA, \cB) \le \frac{A^2B^2}{\varphi(q)} + R, 
\end{equation}
where  
$$
R = =   \frac{1}{\varphi(q)}\sum_{\chi\in \cX_q*} 
\sum_{\substack{a_1, a_2 \in \cA\\
 \gcd(a_1a_2,q)=1}}  \ \sum_{\substack{b_1, b_2 \in \cB\\
 \gcd(b_1b_2,q)=1}}      \chi\(a_1a_2b_1^{-1}b_2^{-1}\).
$$
Rearranging, we obtain
$$
R =   \frac{1}{\varphi(q)}\sum_{\chi\in \cX_q*} 
\(\sum_{a\in \cA}   \chi\(a\)\)^2  \(\sum_{b \in \cB}      \overline\chi\(b\)\)^2, 
$$
where $\overline\chi$ is the complex conjugate character (note the co-primality 
conditions $\gcd(a,q)= \gcd(b,q)=1$ are abandoned from the last sum as redundant). 
Now, using the Cauchy inequality and recalling Lemma~\ref{lem:4th-Moment}, we conclude that 
$$
|R| \le  AB q^{o(1)}.
$$
Substituting this in~\eqref{eq:E R},   and  recalling the well-known lower bound
$$
\varphi(q) \gg  \frac{q}{\log \log(q+2)}
$$\
see~\cite[Theorem~328]{HardyWright}, we conclude the proof. 
\end{proof} 

  \subsection{Number of solutions to some congruences with reciprocals} 
 \label{sec:CongEqReipr}  
 An important tool in our argument is an upper bound on the number 
of solutions   $J_r(q;K)$ to the  congruence   
$$\frac{1}{x_1}+ \ldots+ \frac{1}{x_r}\equiv \frac{1}{x_{r+1}}+ \ldots+\frac{1}{x_{2r}}\bmod q, \quad 
1 \le x_1,  \ldots, x_{2r} \le K, 
$$
where $r =1, 2, \ldots$. 

We start with the trivial bound $1 \le K \le q$ we have
 \begin{equation}
\label{eq:J triv}
J_1(q;K) \ll K. 
\end{equation}

For $r \ge 2$ and arbitrary $q$ and $K$, good upper bounds on  $J_r(q;K)$  
are known only for $r =2$ and are due to 
Heath-Brown~\cite[Page~368]{H-B1}
(see the bound on the sums of quantities $m(s)^2$ in the 
notation of~\cite{H-B1}).  More precisely, we have:

\begin{lemma}
\label{lem:Cayley} For $1 \le K \le q$ we have 
$$
J_2(q;K) \le \(K^{7/2} q^{-1/2} + K^2\)q^{o(1)}.
$$
\end{lemma}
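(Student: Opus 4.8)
The plan is to reduce the counting of $J_2(q;K)$ to a problem that has already been handled by Heath-Brown in~\cite{H-B1}. Concretely, $J_2(q;K)$ counts solutions of
$$
\frac{1}{x_1}+\frac{1}{x_2}\equiv \frac{1}{x_3}+\frac{1}{x_4}\bmod q,\qquad 1\le x_1,x_2,x_3,x_4\le K,
$$
where we may restrict to tuples with $\gcd(x_1x_2x_3x_4,q)=1$ (the remaining tuples, where some $x_i$ shares a factor with $q$, contribute at most $O(K^3 q^{o(1)})$ and are absorbed into the error term via the standard divisor bound on the number of $x_i\le K$ with a prescribed common factor with $q$). Clearing denominators, the congruence becomes
$$
x_2 x_3 x_4 + x_1 x_3 x_4 \equiv x_1 x_2 x_4 + x_1 x_2 x_3 \bmod q,
$$
which is a congruence of the shape $\sum \pm x_i x_j x_k\equiv 0$ in the six products $x_i x_j$; after renaming this is exactly (up to relabelling) the quantity whose mean square $\sum_s m(s)^2$ is bounded on page~368 of~\cite{H-B1}.

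First I would set up the dictionary carefully: in the notation of~\cite{H-B1}, $m(s)$ counts representations of a residue class $s$ by an expression involving two variables from $\{1,\dots,K\}$, and $\sum_s m(s)^2$ is then precisely a count of quadruples solving a congruence of the above type. Heath-Brown's argument runs by fixing one pair of variables, detecting the congruence for the other pair via Kloosterman/character sums, and invoking the Weil bound together with a divisor-function average; this produces the two terms $K^{7/2}q^{-1/2}$ (the ``generic'' Kloosterman contribution) and $K^2$ (the diagonal, where the two detected quantities coincide as integers rather than merely modulo $q$). I would quote this computation essentially verbatim, indicating which lines of~\cite{H-B1} give which term, rather than reproducing it.

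The one point needing genuine care — and the main obstacle — is checking that the particular symmetric combination $1/x_1+1/x_2\equiv 1/x_3+1/x_4$ really does fall under Heath-Brown's count and not some slightly different congruence: one must verify that after clearing denominators the congruence is homogeneous of the right degree in the right number of free variables, and that the co-primality conditions $\gcd(x_i,q)=1$ needed to make the inverses well-defined match the hypotheses there (or can be inserted harmlessly). Since $K\le q$, there is no wrap-around subtlety beyond what~\cite{H-B1} already addresses. Once the translation is confirmed, the stated bound
$$
J_2(q;K)\le\bigl(K^{7/2}q^{-1/2}+K^2\bigr)q^{o(1)}
$$
follows immediately, the $q^{o(1)}$ absorbing the divisor-function losses and the contribution of non-coprime tuples.
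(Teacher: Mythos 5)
Your proposal matches the paper's treatment: Lemma~\ref{lem:Cayley} is not proved in the paper either, but is quoted directly from Heath-Brown~\cite[p.~368]{H-B1}, whose quantity $\sum_s m(s)^2$ is literally $J_2(q;K)$ (the count of $x_1,x_2,x_3,x_4\le K$ with $\bar x_1+\bar x_2\equiv \bar x_3+\bar x_4 \bmod q$), so the clearing of denominators and the ``translation'' you worry about are not actually needed. The one blemish is your claim that tuples with some $\gcd(x_i,q)>1$ contribute $O(K^3q^{o(1)})$ and are ``absorbed into the error term'' --- a $K^3$ term would \emph{not} be dominated by $K^{7/2}q^{-1/2}+K^2$ when, say, $K\approx q^{1/2}$ --- but this is moot, since the congruence defining $J_2(q;K)$ involves the inverses $1/x_i$ modulo $q$ and hence already restricts to invertible $x_i$.
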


It is also shown by Fouvry and  Shparlinski~\cite[Lemma~2.3]{FouShp} that the bound of 
Lemma~\ref{lem:Cayley} 
can be improved on average over $q$ in a dyadic interval $[Q,2Q]$. The same argument 
also works for $J_r(q;K)$ without any changes. 

Indeed,  let  $J_r(K)$ be the number 
of solutions   to  the  equation 
$$
\frac{1}{x_1}+ \ldots+ \frac{1}{x_r} = \frac{1}{x_{r+1}}+ \ldots+\frac{1}{x_{2r}}, \qquad 
1 \le x_1,  \ldots, x_{2r} \le K, 
$$
where $r =1, 2, \ldots$. We recall  that by the result of Karatsuba~\cite{Kar} (presented in 
the proof of~\cite[Theorem~1]{Kar}), see also~\cite[Lemma~4]{BouGar1}, we have:

\begin{lemma}
\label{lem:Recipr Eq} For any fixed positive integer $r$,  we have 
$$
J_r(K) \le K^{r + o(1)}.
$$
\end{lemma}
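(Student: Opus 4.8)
The plan is to prove the bound by induction on $r$, reducing each step to a two-variable problem that is handled by the divisor bound $\tau(n)\le n^{o(1)}$.

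For $r=1$ the equation forces $x_1=x_2$, so $J_1(K)=K$. Assume the bound for all smaller values of $r$. Rewriting the defining equation as
$$
\frac1{x_1}-\frac1{x_{r+1}}=\sum_{j=2}^{r}\frac1{x_{r+j}}-\sum_{i=2}^{r}\frac1{x_i}=:\rho ,
$$
we see that $\rho$ depends only on the $2(r-1)$ variables $x_2,\dots,x_r,x_{r+2},\dots,x_{2r}\in[1,K]$, and that its denominator in lowest terms divides $\lcm(x_2,\dots,x_{2r})$, hence is at most $K^{2(r-1)}$. The key elementary input is this: for a fixed nonzero rational $\rho$ whose denominator is at most a fixed power of $K$, the number of pairs $(u,v)\in[1,K]^2$ with $1/u-1/v=\rho$ is $K^{o(1)}$. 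Indeed, clearing denominators turns this into a multiplicative equation in which, after dividing through by the greatest common divisor of $u$ and the denominator of $\rho$, one of the variables is forced to be a divisor of that denominator; summing over such divisors and invoking $\tau(n)\le n^{o(1)}$ gives the claim. For $\rho=0$ the equation $1/u=1/v$ has exactly $K$ solutions.

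Splitting the count of $J_r(K)$ according to whether $\rho=0$, the contribution of $\rho=0$ equals $K\cdot J_{r-1}(K)$ — the factor $K$ coming from the diagonal choice $x_1=x_{r+1}$ — and is $\le K^{r+o(1)}$ by the inductive hypothesis. The remaining, off-diagonal, part is at most $K^{o(1)}$ times the number of configurations $(x_2,\dots,x_r,x_{r+2},\dots,x_{2r})$ with $\rho\ne0$; estimating this number trivially by $K^{2(r-1)}$ yields only $K^{2r-2+o(1)}$, which is enough for $r=2$ but not for $r\ge3$. This off-diagonal term is the main obstacle. To treat it one uses that $\rho\ne0$ forces at least one of the pairs $(x_i,x_{r+i})$, $2\le i\le r$, to be unbalanced, and one sorts the solutions by the set $T$ of unbalanced pairs; writing $|T|=t$ this gives
$$
J_r(K)=\sum_{t=0}^{r}\binom rt K^{\,r-t}M_t ,
$$
where $M_t$ is the number of solutions of $\sum_{i=1}^{t}\left(1/x_i-1/y_i\right)=0$ with all $x_i\ne y_i$ and all $x_i,y_i\in[1,K]$ (so $M_0=1$ and $M_1=0$). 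It therefore suffices to prove $M_t\le K^{t+o(1)}$ for each fixed $t$. Pinning a single pair and arguing as above only gives $M_t\le K^{2t-2+o(1)}$, which for $t\ge3$ is too weak; obtaining the sharp bound $M_t\le K^{t+o(1)}$ — which matches the trivial diagonal contribution and is the technical heart of the statement — requires a finer analysis of the multiplicative structure of the equation, and this is exactly what is carried out by Karatsuba in~\cite{Kar} (see also~\cite[Lemma~4]{BouGar1}), so that is the estimate we invoke.
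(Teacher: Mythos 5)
The paper does not actually prove this lemma: it is quoted directly from Karatsuba~\cite{Kar} (see also~\cite[Lemma~4]{BouGar1}), and your argument ends by invoking exactly the same reference for the only genuinely hard step, so in substance the two treatments coincide. Be aware, though, that the scaffolding you erect before that citation is logically idle: since $M_t\le J_t(K)$ and $J_r(K)=\sum_{t}\binom{r}{t}K^{r-t}M_t$, the bound $M_t\le K^{t+o(1)}$ you reduce to is equivalent in strength to the lemma itself (your induction on $r$ never controls the top term $M_r$), so the $\rho=0$ versus $\rho\ne 0$ split and the divisor-bound count of pairs $(u,v)$ --- both of which are correct --- settle only the cases $r\le 2$ and otherwise merely restate the problem, as you yourself acknowledge.
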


Now repeating the argument of the proof of~\cite[Lemma~2.3]{FouShp} and using 
Lemma~\ref{lem:Recipr Eq}  in the appropriate place,  we obtain:

\begin{lemma}
\label{lem:Cayley Aver} For any   fixed positive integer $r$ and sufficiently large integers $1 \le K \le Q$, 
we have 
$$
\frac{1}{Q}\sum_{Q \le q \le 2Q} J_{r}(q;K) \le \(K^{2r} Q^{-1} + K^r\)Q^{o(1)}.
$$
\end{lemma}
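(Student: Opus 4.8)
The plan is to imitate the proof of~\cite[Lemma~2.3]{FouShp}, replacing its appeal to a bound on the analogue of $J_2(K)$ by the general bound of Lemma~\ref{lem:Recipr Eq} on $J_r(K)$. First I would rewrite $J_r(q;K)$ via additive characters modulo $q$, which turns the counting problem into
\begin{equation*}
J_r(q;K) = \frac{1}{q}\sum_{a=0}^{q-1}\left|\sum_{\substack{1\le x\le K\\ \gcd(x,q)=1}} \e_q\!\left(a\overline{x}\right)\right|^{2r},
\end{equation*}
and then sum this over $q\in[Q,2Q]$. The term $a=0$ contributes at most $K^{2r}$ for each of the $\asymp Q$ values of $q$, and after division by $Q$ this produces the $K^{2r}Q^{-1}$ term in the claimed bound (up to the $Q^{o(1)}$ factor which one keeps in reserve for the divisor-type losses below). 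So the real work is to show that the contribution of the nonzero frequencies, summed over $q\in[Q,2Q]$ and divided by $Q$, is at most $K^r Q^{o(1)}$.

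For the nonzero frequencies I would expand the $2r$-th power and collect the congruence condition: the contribution of $a\not\equiv 0$ is
\begin{equation*}
\frac{1}{q}\sum_{a=1}^{q-1}\ \sum_{\substack{1\le x_1,\dots,x_{2r}\le K\\ \gcd(x_1\cdots x_{2r},q)=1}} \e_q\!\left(a\left(\overline{x_1}+\dots+\overline{x_r}-\overline{x_{r+1}}-\dots-\overline{x_{2r}}\right)\right),
\end{equation*}
and the inner sum over $a$, together with the $a=0$ term already accounted for, forces $\overline{x_1}+\dots+\overline{x_r}\equiv \overline{x_{r+1}}+\dots+\overline{x_{2r}}\pmod q$ with the count being $q\cdot(\text{that number}) - q^{\text{something}}$; in practice one simply writes $J_r(q;K)-q^{-1}(\text{trivial term})$ as $q^{-1}$ times a character sum and bounds it by the number of integer tuples $(x_1,\dots,x_{2r})\in[1,K]^{2r}$ for which the quantity $\delta = \overline{x_1}+\dots+\overline{x_r}-\overline{x_{r+1}}-\dots-\overline{x_{2r}}$, computed with \emph{integer} reciprocals cleared to a common denominator, is divisible by $q$. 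Clearing denominators, $\delta$ is a rational number $P/D$ with $|P|\ll K^{2r-1}$ and $D\mid x_1\cdots x_{2r}$, so for each fixed tuple the set of $q\in[Q,2Q]$ with $q\mid P$ (and $P\ne 0$) has size $O(K^{2r-1}/Q \cdot Q^{o(1)})$ by the divisor bound; summing over all $K^{2r}$ tuples and dividing by $Q$ gives $O(K^{4r-1}Q^{-2+o(1)})$, which is acceptable provided $K$ is not too large, while the tuples with $P=0$ — i.e. genuine solutions of the \emph{equation} $\overline{x_1}+\dots+\overline{x_r}=\overline{x_{r+1}}+\dots+\overline{x_{2r}}$ — are counted by $J_r(K)\le K^{r+o(1)}$ via Lemma~\ref{lem:Recipr Eq}. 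Combining the two contributions and the $a=0$ term yields
\begin{equation*}
\frac{1}{Q}\sum_{Q\le q\le 2Q} J_r(q;K) \le \left(K^{2r}Q^{-1} + K^r + K^{4r-1}Q^{-2}\right)Q^{o(1)},
\end{equation*}
and one checks that in the range $1\le K\le Q$ the term $K^{4r-1}Q^{-2}$ is dominated by $K^{2r}Q^{-1}+K^r$ (indeed $K^{4r-1}Q^{-2}\le K^{2r}Q^{-1}$ is equivalent to $K^{2r-1}\le Q$, which follows from $K\le Q$ for $r\ge 1$ after a moment's thought, or is absorbed by splitting at $K\le Q^{1/2}$), giving the stated bound.

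The main obstacle is the bookkeeping around the frequencies $a\not\equiv 0\pmod q$: one must carefully separate the contribution of tuples giving a genuine equation over the integers (handled by Lemma~\ref{lem:Recipr Eq}) from those giving a nonzero rational $\delta$ whose numerator is divisible by $q$ (handled by the divisor bound after summing over $q$), and then verify that the resulting error term is negligible in the full range $1\le K\le Q$ rather than only in a restricted range — this last point is exactly where one reproduces the argument of~\cite[Lemma~2.3]{FouShp} verbatim, the only change being the exponent $r$ in place of $2$ and the use of Lemma~\ref{lem:Recipr Eq} in place of the corresponding input there. No new idea beyond this substitution is needed, which is why the paper states that ``the same argument also works for $J_r(q;K)$ without any changes''.
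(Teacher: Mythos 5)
Your overall skeleton is the right one and matches what the paper intends (the paper itself gives no details beyond the pointer to~\cite[Lemma~2.3]{FouShp} and the instruction to insert Lemma~\ref{lem:Recipr Eq}): clear denominators to rewrite the congruence $\overline{x_1}+\dots+\overline{x_r}\equiv \overline{x_{r+1}}+\dots+\overline{x_{2r}} \bmod q$ as $q\mid P$ with
$$
P=\sum_{i\le r}\ \prod_{j\ne i}x_j-\sum_{i>r}\ \prod_{j\ne i}x_j, \qquad |P|\le 2rK^{2r-1},
$$
then split according to whether $P=0$ (a genuine rational identity, counted by $J_r(K)\le K^{r+o(1)}$ per modulus) or $P\ne 0$ (counted by fixing the tuple and counting admissible moduli $q\in[Q,2Q]$ with $q\mid P$). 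The additive-character detour at the start is unnecessary but harmless.

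The gap is in the quantitative treatment of the tuples with $P\ne 0$. You announce ``the divisor bound'' but then write $O(K^{2r-1}Q^{-1}\cdot Q^{o(1)})$ for the number of $q\in[Q,2Q]$ dividing $P$; that is the trivial cofactor count (each such $q$ has cofactor $P/q\le |P|/Q$), not the divisor bound. The divisor bound says this number is at most the number of divisors of $P$, which is $|P|^{o(1)}=Q^{o(1)}$ since $|P|\le 2rK^{2r-1}\le 2rQ^{2r-1}$. With your weaker count you arrive at the extra term $K^{4r-1}Q^{-2}$, and your claim that it is absorbed by the main terms is false: $K^{4r-1}Q^{-2}\le K^{2r}Q^{-1}$ requires $K^{2r-1}\le Q$, and $K^{4r-1}Q^{-2}\le K^{r}$ requires $K^{3r-1}\le Q^2$; for $r\ge 2$ and, say, $K=Q^{9/10}$ neither holds (with $r=2$ your term is $Q^{4.3}$ against main terms $Q^{2.6}$ and $Q^{1.8}$). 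Since the lemma is applied with $K=K_j$ as large as roughly $Q$, the full range $1\le K\le Q$ genuinely matters. Replacing the cofactor count by the divisor-function bound makes the $P\ne0$ contribution at most $K^{2r}Q^{o(1)}$ in total, i.e.\ $K^{2r}Q^{-1+o(1)}$ after dividing by $Q$, which is exactly the first term of the stated estimate, and the proof closes. (A minor further slip: the $a=0$ frequency contributes $K^{2r}/q$ per modulus, not $K^{2r}$; as literally phrased your accounting of that term would yield $K^{2r}$ rather than $K^{2r}Q^{-1}$, though the factor $1/q$ is present in your displayed formula.)
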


\section{Proofs of Main Results}

\subsection{Proof of Theorem~\ref{thm:SALMNq}}


For an integer $u$ we define
$$
\langle u\rangle_q = \min_{k \in \Z} |u - kq|
$$
as the distance to the closest integer, 
 which is a multiple of $q$.

Changing the order of summation and then changing the variables
$$
x \mapsto \xbar \mand y \mapsto \ybar,
$$
 we obtain
 \begin{align*}
 \SALMNq &  =  \sum_{\ell \in \cL} \alpha_\ell  \sum_{x\in \Z_q^*}  \sum_{m\in \cM} \sum_{n \in \cN}
  \eq\(\ell \xbar \ybar +m x + ny \)\\
& =   \sum_{\ell \in \cL} \alpha_\ell \sum_{x,y\in \Z_q^*}   \eq\(\ell \xbar \ybar\)      \sum_{m\in \cM}  \eq(mx) \sum_{n \in \cN} \eq(n y).
\end{align*}
Hence
$$
 \SALMNq   =   \sum_{\ell \in \cL} \alpha_\ell 
\sum_{x,y\in \Z_q^*}  \mu_x \nu_y  \eq\(\ell \xbar \ybar\), 
$$
 with some  complex coefficients $\mu_x$ and $\nu_y$, satisfying
 \begin{equation}
 \label{eq:bound mu nu}
|\mu_x| \le  \min\left\{M,  \frac{q}{\langle x \rangle_q}\right \} \mand
|\nu_y| \le  \min\left\{N,  \frac{q}{\langle y \rangle_q}\right \}, 
\end{equation}
see~\cite[Bound~(8.6)]{IwKow}.

We now set  $I = \rf{\log (M/2)}$ and define $2(I+1)$ the sets
 \begin{equation}
 \label{eq:Set Q}
\begin{split}
\cQ_0^{\pm} &   =\{x \in \Z~: ~ 0 < \pm x \le q/M,\ \gcd(x,q)=1\}, \\
\cQ_i^{\pm}  & = \{x \in \Z~: ~\min\{q/2, e^{i} q/M\}\ge \pm x > e^{i-1}q/M,\\
& \qquad  \qquad  \qquad  \qquad  \qquad  \qquad  \qquad  \qquad   \qquad \gcd(x,q)=1 \}, 
\end{split}
\end{equation}
where $i = 1, \ldots, I$.
Similarly, we set  $J = \rf{\log (N/2)}$ and define $2(J+1)$ the sets
 \begin{equation}
 \label{eq:Set R}
\begin{split}
\cR_0^{\pm} &   =\{x \in \Z~: ~ 0 < \pm y \le q/N,\ \gcd(x,q)=1\}, \\
\cR_j^{\pm}  & = \{y \in \Z~: ~\min\{q/2, e^{j} q/N\}\ge \pm y > e^{j-1}q/N,\\
& \qquad  \qquad  \qquad  \qquad  \qquad  \qquad  \qquad  \qquad   \qquad \gcd(x,q)=1 \}, 
\end{split}
\end{equation}
where $j = 1, \ldots, J$.

Therefore, 
 \begin{equation}
\label{eq:SAIJ S0i}
 \SALMNq  \ \ll   \sum_{i=0}^I \sum_{j=0}^J \(\left|S_{i,j}^+\right | + \left|S_{i,j}^-\right|\),
\end{equation}
where
$$
S_{i,j} ^{\pm} =  \sum_{\ell \in \cL}\sum_{x \in \cQ_i^\pm}  \sum_{y \in \cR_j^\pm} 
 \alpha_\ell     \mu_x \nu_y  \eq\(\ell \xbar \ybar\),  \qquad i = 0, \ldots, I, \ j = 0, \ldots J.
$$

For $\lambda \in \Z_q$ we denote 
$$
T_i^{\pm}(\lambda) = \dsum_{\substack{\ell \in \cL, \ x \in \cQ_i^\pm \\ \ell x \equiv \lambda \bmod q}}  \alpha_\ell     \mu_x 
$$
and note that $T_i^{\pm}(\lambda) =0 $ unless $\lambda \in \Z_q^*$. 
Therefore, 
$$
S_{i,j} ^{\pm} =  \sum_{\lambda \in \Z_q^*}  \sum_{y \in \cR_j^\pm} T_i^{\pm}(\lambda) 
\nu_y  \eq\(\lambda \ybar\),  \qquad i = 0, \ldots, I, \ j = 0, \ldots J.
$$
Let us fix some integers $i\in [0,I]$ and $j \in [0,J]$. 

We now  fix some integer $r \ge 1$.

Below we present the argument in a general form with an arbitrary integer $r \ge 1$. 
We then apply it with $r =1$ and $r=2$ since we use Lemma~\ref{lem:Cayley}. However 
in the proof of Theorem~\ref{thm:SALMNq Aver} we use it in full generality.

Writing 
$$
\left |S_{i,j} ^{\pm} \right|  \le   \sum_{\lambda \in \Z_q^*} \left| T_i^{\pm}(\lambda)\right|^{1 -1/r}
 \left|  T_i^{\pm}(\lambda)^2\right|^{1/2r}
\left |\sum_{y \in \cR_j^\pm} \nu_y   \eq(m \ybar)\right|, 
$$
by the H{\"o}lder inequality, for every choice of the 
sign `$+$' or `$-$',  we obtain
 \begin{equation}
 \label{eq:Holder S}
\begin{split}
\left |S_{i,j} ^{\pm} \right|  & \le  
\( \sum_{\lambda \in \Z_q^*} \left| T_i^{\pm}(\lambda)\right|\)^{1 -1/r}
\( \sum_{\lambda \in \Z_q^*} \left| T_i^{\pm}(\lambda)\right|^2\)^{1/2r} \\
& \qquad \qquad \qquad \qquad \qquad \(  \sum_{\lambda \in  \Z_q}
\left |\sum_{y \in \cR_j^\pm} \nu_y   \eq(m \ybar)\right|^{2r} \)^{1/2r}.
\end{split}
\end{equation}

We observe that by~\eqref{eq:bound mu nu} and~\eqref{eq:Set Q}, for $x \in \cQ_i^\pm$ we have 
$$
\mu_x \ll e^{-i} M. 
$$
Hence 
 \begin{equation}
 \label{eq:T1}
\begin{split}
\sum_{\lambda \in \Z_q^*} \left| T_i^{\pm}(\lambda)\right| & \ll 
\sum_{\lambda \in \Z_q^*}  
\dsum_{\substack{\ell \in \cL, \ x \in \cQ_i^\pm \\ \ell x \equiv \lambda \bmod q}} \left| \alpha_\ell     \mu_x \right| \ll 
 e^{-i} M \sum_{\lambda \in \Z_q^*}  
 \dsum_{\substack{\ell \in \cL, \ x \in \cQ_i^\pm \\ \ell x \equiv \lambda \bmod q}} 1 \\
  & \ll 
 e^{-i} M  \# \cL \# \cQ_i^\pm \ll   e^{-i} L M  \(e^{i} q/M\) = qL.
\end{split}
\end{equation}
Similarly,
 \begin{align*}
\sum_{\lambda \in \Z_q^*} \left| T_i^{\pm}(\lambda)\right|^2
 & \ll 
 e^{-2i} M^2 \sum_{\lambda \in \Z_q^*}  
\( \dsum_{\substack{\ell \in \cL, \ x \in \cQ_i^\pm \\ \ell x \equiv \lambda \bmod q}} 1\)^2 
=  e^{-2i} M^2   E( \cL, \cQ_i^\pm), 
\end{align*}
where $E(\cA, \cB)$ is as defined in Lemma~\ref{lem:Energy}, which implies 
 \begin{equation}
 \begin{split}
 \label{eq:T2}
\sum_{\lambda \in \Z_q^*} \left| T_i^{\pm}(\lambda)\right|^2 & \le  e^{-2i} M^2   
 \(\frac{L^2  \(e^{i} q/M\) ^2}{q} + L   \(e^{i} q/M\) \) q^{o(1)}\\
 & \le q^{1+o(1)} L^2  + e^{-i} q^{1+o(1)} LM.
\end{split}
\end{equation}

Next, opening up the inner exponential sum in~\eqref{eq:Holder S}, changing the order of summation and 
using the orthogonality of exponential functions, we obtain 
 \begin{align*}
\sum_{\lambda \in  \Z_q}&\left |\sum_{y \in \cR_j^\pm} \nu_y   \eq(\lambda \ybar)\right|^{4} \\
 &   \le\sum_{\lambda \in  \Z_q}  \ssum_{y_1, \ldots y_{2r}  \in \cR_j^\pm }
\prod_{j=1}^r  \nu_{y_j}  \overline{\nu_{y_{r + j}}}
 \eq\(\lambda \sum_{j=1}^r \(\ybar_j - \ybar_{r + j}\)\)\\
 &   \le  \ssum_{y_1,  \ldots,  y_{2r} \in\cR_j^\pm}
\prod_{j=1}^r  \nu_{y_j}  \overline{\nu_{y_{r + j}}}
\sum_{\lambda \in  \Z_q}\eq\(m \sum_{j=1}^r \(\ybar_j - \ybar_{r + j}\)\)\\
 & =  q  \ssum_{\substack{y_1,  \ldots,  y_{2r} \in\cR_j^\pm\\ 
 \ybar_1 + \ldots + \ybar_r \equiv \ybar_{r+1}+ \ldots + \ybar_{2r} \bmod q}}
\prod_{j=1}^r  \nu_{y_j}  \overline{\nu_{y_{r + j}}}.
\end{align*}

We observe that by~\eqref{eq:bound mu nu} and~\eqref{eq:Set R} for $y \in \cR_j^\pm$ we have 
$$
\nu_y \ll e^{-j} N. 
$$
Hence,  
\begin{equation}
\label{eq:S 2r}
\begin{split}
\sum_{\lambda \in  \Z_q}\left |\sum_{y \in \cR_j^\pm} \nu_y   \eq(\lambda \ybar)\right|^{2r}  &
 \ll   e^{-2r j}  q N^{2r}    \ssum_{\substack{y_1,  \ldots,  y_{2r} \in \cR_j^\pm\\ 
 \ybar_1 + \ldots + \ybar_r \equiv \ybar_{r+1}+ \ldots + \ybar_{2r} \bmod q}}1\\
&  \le    e^{-2r j}  q N^{2r}   J_r(q;\fl{e^{j}q/N}).
 \end{split}
\end{equation}

Substituting~\eqref{eq:T1},  \eqref{eq:T2} and~\eqref{eq:S 2r}  in~\eqref{eq:Holder S}, we see that
 \begin{equation}
 \label{eq:Holder STT}
\begin{split}
\left |S_{i,j} ^{\pm} \right|  & \le  
\( qL\)^{1 -1/r}
\(q^{1+o(1)} L^2  + e^{-i} q^{1+o(1)} LM\)^{1/2r} \\
& \qquad \qquad \qquad \qquad \qquad \(  e^{-2r j}  q N^{2r}   J_r\(q;\fl{e^{j}q/N}\)\)^{1/2r}\\
& \le  e^{- j} LN
q^{1+o(1)}  \(1  + e^{-i}   M/L\)^{1/2r}     J_r\(q;\fl{e^{j}q/N}\)^{1/2r}.
\end{split}
\end{equation}


 Now using~\eqref{eq:Holder STT} with $r = 1$ and recalling~\eqref{eq:J triv}, we derive
 \begin{equation}
 \label{eq:Sij}
\begin{split}
\left |S_{i,j} ^{\pm} \right|  & \le e^{- j} LN
q^{1+o(1)}  \(1  + e^{-i/2}   (M/L)^{1/2}\)  e^{j/2}  N^{-1/2} q^{1/2}  \\
& \le    e^{-j/2}
  \(L + e^{-i/2}  L^{1/2} M^{1/2}\)  N^{1/2} q^{3/2+o(1)}. 
\end{split}
\end{equation}
Summing over all admissible $i$ and $j$ yields
\begin{equation}
\label{eq:Bound1}
\SALMNq  \le \(L +    L^{1/2} M^{1/2}\)  N^{1/2} q^{3/2+o(1)}.
\end{equation}

 Next, using~\eqref{eq:Holder STT} with $r = 2$ and invoking Lemma~\ref{lem:Cayley}, we obtain 
 \begin{align*}
\left |S_{i,j} ^{\pm} \right|  & \le e^{- j} LN
q^{1+o(1)}  \(1  + e^{-i/4}   (M/L)^{1/4}\) \\
& \qquad \qquad \qquad \qquad \quad \(e^{7j/8} N^{-7/8} q^{3/4}  + e^{j/2}  N^{-1/2}q^{1/2} \)  \\
& \le    
  \(L + e^{-i/4}  L^{3/4} M^{1/4}\) \(e^{-j/8}N^{1/8}  q^{7/4}  + e^{-j/2}  N^{1/2} q^{3/2}\) q^{o(1)}, 
\end{align*}
and we now derive
\begin{equation}
\label{eq:Bound2}
\SALMNq  \le 
 \(L +    L^{3/4} M^{1/4}\) \( N^{1/8}  q^{7/4}  +   N^{1/2} q^{3/2}\) q^{o(1)}. 
\end{equation}

Combining the bounds~\eqref{eq:Bound1} and~\eqref{eq:Bound2}, we obtain the result.

\subsection{Proof of Theorem~\ref{thm:SALMNq Aver}}

We proceed as in the proof of Theorem~\ref{thm:SALMNq}, in particular, 
we set $J = \rf{\log (N/2)}$. We also define  $K_j = \fl{2 e^{i}Q/N}$ 
and replace  $J_r(q;\fl{e^{j}q/N})$ 
with $J_r(q;K_j)$ in~\eqref{eq:Holder STT},  $j= 0, \ldots, J$.

We now see that by Lemma~\ref{lem:Cayley Aver} for every $j=0, \ldots, J$ for all
but at most $Q^{1-2r \varepsilon + o(1)}$ integers  $q \in [Q,2Q]$ we have 
\begin{equation}
\label{eq: Bound JqKi}
J_{r}(q;K_j) \le \(K_j^{2r} q^{-1} + K_j^r\)Q^{2 r \varepsilon}.
\end{equation}
Since $J = Q^{o(1)}$, for all
but at most $Q^{1-2r \varepsilon + o(1)}$ integers  $q \in [Q,2Q]$, 
the bound~\eqref{eq: Bound JqKi} holds for all $j=0, \ldots, J$ 
simultaneously.  

Now, for every such intreger $q$,  using~\eqref{eq: Bound JqKi} instead of the 
bound of  Lemma~\ref{lem:Cayley},  we obtain 
 \begin{align*}
\left |S_{i,j} ^{\pm} \right|  & \le  e^{- j} LN
q^{1+o(1)}  \(1  + e^{-i}   M/L\)^{1/2r}  \\
& \qquad \qquad \qquad \qquad \quad \(e^{j} N^{-1} q^{1-1/2r}  + e^{j/2}  N^{-1/2}q^{1/2} \)  \\
& \le    
  \(L + e^{-i/2r}  L^{1-1/2r} M^{1/2r}\) \(q^{2-1/2r}  + e^{-j/2}  N^{1/2} q^{3/2}\) q^{o(1)}, 
\end{align*}
instead of~\eqref{eq:Sij} for every $i=0, \ldots, I$ and $j = 0, \ldots, J$.  
Since $I, J= Q^{o(1)}$, the result now follows.




\end{document}